\newtheorem{theorem}{Theorem}
\newtheorem{lem}{Lemma}
\newtheorem{mydef}{Definition}
\newcommand{\hrt}{\hookrightarrow}
\pgfplotsset{compat=1.18}
\author{Shiyun Wen}
\email{ Email:wen\_shi\_yun@163.com}
\address{Department of Information, Beijing City University, Beijing 101300, China}
\title{Two-Dimensional Equivariant Symplectic Submanifolds in Toric Manifolds}
\date{}
\begin{document}

\maketitle

\begin{abstract}
To find all two-dimensional equivariant symplectic submanifolds in symplectic toric manifolds, we combine the convex geometry of Delzant polytopes with local equivariant symplectic models and obtain a criterion for determining when a two-dimensional submanifold is an equivariant symplectic submanifold in a toric manifold.

\vspace{0.2cm}
\noindent \textbf{Keywords:}\quad toric manifold, moment map, equivariant symplectic submanifold 

\vspace{0.2cm}
\noindent\textbf{MSC2020:}\quad 53D05, 53D20
\end{abstract}

\section{Introduction}

Hamiltonian group actions play a fundamental role in symplectic geometry.
Toric manifolds are $2n$-dimensional compact connected symplectic manifolds endowed with an effective Hamiltonian
$T^n$-action(\cite{MR3970272,MR2091310}).
Through the moment map, their symplectic geometry is encoded in combinatorial data of the associated Delzant polytope.
This correspondence makes toric manifolds a convenient setting for problems involving symplectic or Kähler structures, including questions related to canonical metrics(\cite{MR4615178, MR1988506,MR2433928}).

The theory of moment maps, particularly the convexity theorems of Atiyah \cite{Atiyah1982} and Guillemin--Sternberg \cite{Guillemin1982}, reveals the deep relationship between the global geometry of Hamiltonian torus actions and the structure of their moment polytopes. Building on these foundational results, Delzant established in his famous paper \cite{MR984900} a one-to-one correspondence between compact symplectic toric manifolds and simple rational convex polytopes, thereby positioning toric manifolds as fundamental models for understanding symplectic structures.

In 1985, Gromov’s introduction of pseudo-holomorphic curves \cite{Gromov1985}  transformed the field of symplectic geometry. In 1996, Donaldson \cite{Donaldson1996} developed a general method for constructing symplectic submanifolds in any compact symplectic manifold by using approximately holomorphic sections of high tensor powers of a line bundle. When a symplectic manifold admits a group action, the classical Marsden--Weinstein--Meyer reduction theory \cite{Marsden1974,Meyer1973} provides a fundamental tool for constructing lower-dimensional symplectic manifolds.

In the toric setting, equivariant symplectic submanifolds form a natural and important subject of study. On the one hand, such submanifolds inherit both the ambient symplectic structure and the torus action, and their moment map images can often be described via sections of the Delzant polytope. On the other hand, such equivariant embeddings provide new perspectives for understanding the geometric structure of toric manifolds. The systematic development of toric topology by Buchstaber--Panov \cite{BuchstaberPanov2015} further highlights the significance of equivariant submanifolds in the study of toric manifolds.

However, despite extensive research on the global geometry of toric manifolds, a systematic characterization of low-dimensional equivariant symplectic submanifolds is still missing. In particular, when focusing on two-dimensional equivariant symplectic submanifolds inside a symplectic toric manifold, a natural and basic question arises:

\begin{quote}
Given a curve in a Delzant polytope, when can it be lifted to a two-dimensional equivariant symplectic submanifold in the corresponding toric manifold?
\end{quote}

This paper aims to answer this question.

Combining the convex geometric structure of the Delzant polytope with the equivariant local models of toric geometry, we first show that any equivariant symplectic submanifold is itself a toric manifold, whose moment map image is a smooth curve in the polytope under suitable regularity assumptions. Then, by analyzing the standard toric models near vertices, the isotropy weight data, and the smoothness constraints imposed by the corresponding $S^1$-actions, we obtain the necessary and sufficient conditions for determining whether such a curve admits an equivariant symplectic lift.

Therefore, we establish a complete criterion for the existence of two-dimensional equivariant symplectic submanifolds, providing a clear and comprehensive answer to the equivariant embedding problem in symplectic toric manifolds.

\section{Preliminaries}

We begin by introducing some basic definitions and notation. Throughout this paper, $(M^{2n},\omega)$ denotes a compact $2n$-dimensional symplectic manifold with symplectic form $\omega$. Most of the definitions in this section follow \cite{MR1853077}.
\begin{mydef}
A symplectic toric manifold is a compact connected symplectic manifold $(M^{2n},\omega)$ equipped with an effective hamiltonian action of a torus $\mathbb T^n$.
\end{mydef}

\begin{mydef}
A Delzant polytope $\Delta^n$ in $\mathbb R^n$ is a convex polytope satisfying:
\begin{itemize}
  \item it is simple, i.e., there are $n$ edges meeting at each vertex;
  \item it is rational, i.e., each edge meeting at the vertex $p$ is of the form $p+tu_i$, $t\ge 0$, where $u_i\in \mathbb Z^n$;
  \item it is smooth, i.e., for each vertex, the corresponding $u_1,\dots,u_n$ can be chosen to be a $\mathbb Z$-basis of $\mathbb Z^n$.
\end{itemize}
\end{mydef}

According to Delzant’s celebrated work \cite{MR984900}, toric manifolds are classified by Delzant polytopes:
\begin{lem}[{\cite{MR984900}}]
There is the one-to-one correspondence between symplectic toric manifold and Delzant polytopes:
\begin{gather*}
\begin{matrix}
    \{\text{toric manifolds}\} & \stackrel{1-1}{\longrightarrow} & \{\text{Delzant polytopes}\} \\
    (M^{2n},\omega,\mathbb T^n,\mu) & \mapsto & \mu(M)
\end{matrix}
\end{gather*}
\end{lem}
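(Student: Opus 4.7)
The plan is to establish the correspondence in two directions. For the forward direction (the map $(M,\omega,\mathbb{T}^n,\mu)\mapsto\mu(M)$ lands in Delzant polytopes), I would start from a symplectic toric manifold and verify the three defining conditions. The Atiyah--Guillemin--Sternberg convexity theorem already guarantees that $\mu(M)$ is a convex polytope whose vertices are the images of the fixed points of the $\mathbb{T}^n$-action. To obtain simplicity, rationality, and smoothness, I would apply the equivariant Darboux/local normal form near a fixed point $p\in M^{\mathbb{T}^n}$: the $\mathbb{T}^n$-action is locally modeled on the standard linear action on $\mathbb{C}^n$ with isotropy weights $u_1(p),\ldots,u_n(p)\in\mathbb{Z}^n$. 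Effectiveness of the global action forces these weights to be a $\mathbb{Z}$-basis of $\mathbb{Z}^n$, and the moment map image near $\mu(p)$ is the cone $\mu(p)+\sum t_iu_i(p)$ with $t_i\ge0$. This one local computation simultaneously yields all three Delzant conditions.

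For the reverse direction (existence, surjectivity of the map), given a Delzant polytope $\Delta\subset\mathbb{R}^n$ presented by inequalities $\langle x,v_i\rangle\ge\lambda_i$, $i=1,\ldots,d$, with primitive inward normals $v_i\in\mathbb{Z}^n$, I would run Delzant's symplectic reduction construction. The linear map $\pi\colon\mathbb{R}^d\to\mathbb{R}^n$ sending the standard basis to $v_1,\ldots,v_d$ is surjective with integral kernel; exponentiating produces a short exact sequence $1\to N\to\mathbb{T}^d\to\mathbb{T}^n\to 1$ with $N$ a subtorus of dimension $d-n$. Endow $\mathbb{C}^d$ with its standard symplectic form and the diagonal $\mathbb{T}^d$-action whose moment map is $\phi(z)=\tfrac12(|z_1|^2,\ldots,|z_d|^2)+\lambda$. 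Performing the Marsden--Weinstein reduction of $\mathbb{C}^d$ by $N$ at level zero yields a compact symplectic manifold carrying a residual effective Hamiltonian $\mathbb{T}^n$-action whose moment map image is exactly $\Delta$.

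For uniqueness (injectivity), given two symplectic toric manifolds with the same moment polytope $\Delta$, I would construct an equivariant symplectomorphism between them. On the preimage of the interior $\Delta^{\circ}$ both restrict to principal $\mathbb{T}^n$-bundles, and the symplectic structures are pinned down by the affine structure of $\Delta^{\circ}$ together with the action-angle coordinates, giving a canonical identification on the dense open stratum. Over each boundary face one uses the local equivariant normal form near the corresponding orbit type, with the primitive normal data of $\Delta$ serving as common combinatorial input. An equivariant Moser-type argument then glues the local identifications into a global equivariant symplectomorphism.

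The hardest steps are twofold. In the existence direction, the main technical point is to verify that $N$ acts freely on the level set $\phi^{-1}(0)$ so that the reduced space is a smooth manifold (not merely an orbifold) with the desired polytope; this is precisely where the unimodularity condition (that the $u_i$ at each vertex form a $\mathbb{Z}$-basis) is essential. In the uniqueness direction, the delicate part is ensuring that the local equivariant symplectomorphisms constructed from the normal form near different orbit strata match up consistently across faces of all codimensions, which is what the equivariant Moser trick is designed to handle.
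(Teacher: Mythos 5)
The paper does not prove this statement: it is quoted as a lemma with attribution to Delzant's paper, so there is no internal argument to compare against. Your outline is, in substance, the standard proof of Delzant's classification, and all three of its components are pointed in the right direction: the Atiyah--Guillemin--Sternberg convexity theorem plus the equivariant local normal form at a fixed point for the forward direction (with effectiveness forcing the isotropy weights at each fixed point to be a $\mathbb{Z}$-basis, which is exactly the simple/rational/smooth package), symplectic reduction of $\mathbb{C}^d$ by the kernel torus $N$ for surjectivity, and an equivariant identification over $\Delta$ for injectivity. Two remarks. First, in the forward direction you should note explicitly that effectiveness of the \emph{global} action implies effectiveness of the isotropy representation at a fixed point (an element acting trivially near a fixed point acts trivially on all of $M$ by connectedness), since that is what makes the weight matrix unimodular. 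Second, the uniqueness step is where your sketch is thinnest: Delzant's original argument does not proceed by gluing local normal forms with an equivariant Moser trick, but rather by a sheaf-theoretic computation (vanishing of a first \v{C}ech cohomology group classifying $\mathbb{T}^n$-invariant symplectic structures over a fixed polytope); the action-angle/Moser route you describe can be made to work but the consistency of the identifications across boundary strata of all codimensions is a genuinely delicate point, not a routine one. As a blind reconstruction of a cited classical theorem, the proposal is sound at the level of an outline.
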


Let $\alpha_i\in \mathbb Z^n\ (i\in{1,\dots,d})$ be the normal vectors of the facets of $\Delta^n$, i.e.,
$$
\Delta^n=\{x|\langle x,\alpha_i\rangle \le \lambda_i\, i=1,\cdots,d\}.
$$
By the definition of Delzant polytope $\Delta^n$, every vertex $v\in \Delta^n$ is a intersection of $n$ facets: $v=F_{i_1}\cap \cdots \cap F_{i_n}$, and the corresponding normal vectors $\alpha_{i_1},\dots,\alpha_{i_n}$ is a $\mathbb Z$-basis of $\mathbb Z^n$.

Let $F_i$ denote the facet of $\Delta^n$ corresponding to $\alpha_i$. The  facial submanifold (corresponding to $F_i$) $Y_i^{2(n-1)}=\mu^{-1}(F_i)$ is a $(2n-2)$-dimensional symplectic toric manifold. Its isotropy subgroup $T(F_i)$ can be written as
$$T(F_i)=\{e^{2\pi i (\alpha_i^1) \varphi},\cdots,  e^{2\pi i(\alpha_i^n) \varphi }\},$$
where $\varphi\in \mathbb R$ and $\alpha_i=(\alpha_i^1,\dots,\alpha_i^n)^t\in Z^n$.

The map $l:F_i\mapsto T(F_i)$ is called the characteristic map of $M^{2n}$. By the properties of Delzant polytope, $l$ can be extended to the map:
\begin{eqnarray}
l:\text{ the set of faces of } \Delta^n\rightarrow \text{the set of subtorus  of } \mathbb T^n \label{14},
\end{eqnarray}
i.e. $l:F^{n-k}\mapsto T(F_{i_1})\times \cdots \times T(F_{i_k})\subset \mathbb T^n$, where $F^{n-k}=F_{i_1}\cap \cdots\cap F_{i_k}.$ Actually, $T(F_{i_1})\times \cdots \times T(F_{i_k})\subset \mathbb T^n$ is the  isotropy subgroup of $Y^{2(n-k)}=\mu^{-1}(F^{n-k})$.

Given a Delzant polytope $\Delta^n$ and the map $l$, we can construct a toric manifold according to \cite[Construction 5.12]{MR1897064}:
\begin{eqnarray}
M^{2n}=(\mathbb T^n\times \Delta^n)/\sim\label{1},
\end{eqnarray}
where $(t_1,r_1)\sim (t_2,r_2)$ if and only if $r_1=r_2$ and $t_1t_2^{-1}\in l(F(r_1))$,  $~F(r_1)$ is the minimal face containing $r_1$ in its relative interior.

Actually, in the construction of Buchstaber and Panov in \cite{MR1897064}, they gave a class of manifolds by equation (\ref{1}) which is called quasitoric manifolds in a more general case:
 \begin{enumerate}
 \item
 $\Delta^n$ need only to be a simple polytope,
 \item
 $\alpha_i\in Z^n$ is the facet vector corresponding to $F_i$ (need not to be a normal vector of facet of $\Delta^n$). At every vertex $p_i\in \Delta^n$, $p_i=F_{i_1}\cap\cdots\cap F_{i_n}$ is an intersection of n facets, $\{\alpha_{i_j}\}$ satisfy:
$$
\det(\alpha_{i_1},\cdots,\alpha_{i_n})=1.
$$

 \end{enumerate}

Let
$$
T(F_i)=\{e^{2\pi i(\alpha_{i})_1 \varphi},\cdots,  e^{2\pi i(\alpha_{i})_n \varphi }\},
$$
where $\varphi\in \mathbb R$ and $\alpha_i=((\alpha_{i})_1,\dots,(\alpha_{i})_n)^t\in Z^n$.

Define a map $l:F_i\mapsto T(F_i)$, then $l$ can be extended to a map:
$$
l:\{\text{the set of face of } \Delta^n\}\rightarrow\{ \text {the  set  of subtori of } \mathbb T^n\}.
$$

\section{Proofs of main results}
%%%%%%%%%%%%%%%%%%%%%%%%%%%%%%%%%%%%%%%%%%%%%%%%%%%%%%%%%%
In this section, we present and prove our main theorem. We first recall the definition of equivariant symplectic submanifold of a symplectic toric manifold.
\begin{mydef}
A $2k$-dimensional symplectic manifold $N^{2k}$ is an equivariant symplectic submanifold of a symplectic toric manifold $M^{2n}$ if there exists an embedding $\rho:\mathbb T^k\hrt \mathbb T^n$ such that the diagram
\begin{align}\label{dg}
\begin{split}
\xymatrix{\mathbb T^k \times N^{2k} \ar[r]^{\rho\times i} \ar[d]_{\psi_1}& \mathbb T^n \times M^{2n} \ar[d]_{\psi} \\
N^{2k}\ar[r]^{i} & M^{2n} }
\end{split}
\end{align}
commutes, where $i:N^{2k}\hrt M^{2n}$ is a symplectic embedding, $\psi$ is a fixed effective and Hamiltonian $\mathbb T^n$-action and $\psi_1$ is an effective $\mathbb T^k$-action.
\end{mydef}

\begin{theorem}
If a $2k$-dimensional symplectic manifold $N^{2k}$ is an equivariant symplectic submanifold of a symplectic toric manifold $(M^{2n},\omega,\mathbb T^n,\mu)$, then $N^{2k}$ is also a toric manifold.
\end{theorem}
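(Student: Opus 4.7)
The plan is to inherit all four pieces of the toric structure on $N^{2k}$ directly from $M^{2n}$, exploiting the commutativity of diagram~(\ref{dg}). Three of the four ingredients are essentially free: $\omega_N := i^{*}\omega$ is a symplectic form on $N$ because $i$ is a symplectic embedding, the action $\psi_1$ is effective by hypothesis, and the dimensions match since $\dim\mathbb T^k = k = \tfrac12\dim N^{2k}$. Compactness follows from $N^{2k}$ being a closed embedded submanifold of the compact manifold $M^{2n}$, and connectedness may either be built into the definition of an embedded submanifold or obtained by restricting to a connected component.

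The substantive step is to exhibit a moment map for $\psi_1$. Dictated by the diagram, the natural candidate is
\[
\mu_N \;=\; (d\rho)^{*}\circ\mu\circ i \;:\; N^{2k}\longrightarrow (\mathfrak t^{k})^{*},
\]
where $d\rho:\mathfrak t^k\to\mathfrak t^n$ is the Lie algebra map induced by $\rho:\mathbb T^k\hookrightarrow \mathbb T^n$ and $(d\rho)^{*}$ is its transpose. The commutativity of~(\ref{dg}) implies that for every $X\in\mathfrak t^k$ the fundamental vector field $X_N$ of $\psi_1$ on $N$ and the fundamental vector field $(d\rho(X))_M$ of $\psi$ on $M$ are $i$-related, i.e.\ $i_{*}X_N = (d\rho(X))_M\circ i$. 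Combining this with the defining property of $\mu$ and pulling back along $i$, I would compute
\[
d\langle \mu_N,X\rangle
= d\langle \mu\circ i,\,d\rho(X)\rangle
= i^{*}\bigl(\iota_{(d\rho(X))_M}\omega\bigr)
= \iota_{X_N}(i^{*}\omega)
= \iota_{X_N}\omega_N,
\]
which is exactly the Hamiltonian condition. Equivariance of $\mu_N$ is formal: it follows from equivariance of $\mu$ together with the fact that $\rho$ is a group homomorphism, so $(d\rho)^{*}$ intertwines the coadjoint actions (both are trivial in the torus case).

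The main obstacle, such as it is, is largely bookkeeping. One must be careful that the candidate $\mu_N$ really assembles into a moment map in the sense required by the definition of a symplectic toric manifold, and that compactness is genuinely available in the setup (otherwise the conclusion would fail on the compactness axiom even though everything else goes through). Once these points are confirmed, the quadruple $(N^{2k},\omega_N,\mathbb T^k,\mu_N)$ satisfies every clause of the definition, and hence $N^{2k}$ is itself a symplectic toric manifold.
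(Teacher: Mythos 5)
Your proposal is correct and follows essentially the same route as the paper: both define $\mu_N=\rho^{*}(\mu\circ i)$ (your $(d\rho)^{*}\circ\mu\circ i$) and verify the Hamiltonian condition by the identical chain of identities, using that the fundamental vector fields are $i$-related via the equivariance of the embedding. Your version is slightly more careful in making the $i$-relatedness and the compactness/effectiveness checks explicit, but the substance is the same.
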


\begin{proof}
For any $X\in\mathbb R^n$, we denote $X^{\#}$ the vector field on $M$ generated by the one-parameter subgroup $\{\exp\{tX\}~|~t\in \mathbb R\}\subseteq \mathbb T^n$. The action of $\mathbb T^n$ on $M^{2n}$ is Hamiltonian, so for each $X\in \mathbb R^n$, we have
$$\iota_{X^{\#}}\omega=d\mu^X=d\langle\mu,X\rangle.$$

Since $i:N^{2k}\hookrightarrow M^{2n}$ is a symplectic embedding, the symplectic structure on $N^{2k}$ is $i^{*}\omega$. The embedding $\rho:\mathbb T^k\hrt \mathbb T^n$ induces a Lie algebra homeomorphism $\rho_*:\mathbb R^{k}\hookrightarrow\mathbb R^{n}$ and a homeomorphism of dual Lie algebras $\rho^*:\mathbb R^{n}\rightarrow \mathbb R^{k}$. So for any $X_1\in \mathbb R^{k}$, we have
$$\iota_{X_1^{\#}}(i^*\omega)=\omega(i_*(X_1^{\#}),i_* \cdot)=i^*({\iota_{i_*(X_1^{\#})}\omega})=i^*d\langle\mu,{\rho_*X_1}\rangle=d\langle\rho^*(\mu\circ i),X_1\rangle,$$
where $X_1^{\#}$ is the vector field on $N^{2k}$ generated by the one-parameter subgroup $\{\exp\{ tX_1\}~|~t\in \mathbb R\}\subseteq \mathbb T^k$. Thus $(N^{2k},i^{*}\omega,\mathbb T^{k},\mu_N)$ is a toric manifold, where $\mu_N=\rho^*(\mu\circ i):N^{2n-2}\rightarrow \mathbb R^{n-1}$ is the moment map.
\end{proof}

In order to study more properties of the symplectic submanifold of $M^{2n}$, we provide a useful lemma in the following.
\begin{lem}
Let $(M^{2n},\omega,\mathbb T^n,\mu)$ be a toric manifold. For any point $p$ in $M^{2n}$. We have
$$
T_pG_p \subset \ker\ d\mu_p,
$$
where $G_p$ is the $\mathbb T^n$-orbit through $p$.

\end{lem}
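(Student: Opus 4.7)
The plan is to unwind the definition of $G_p$ in terms of the infinitesimal action. Any tangent vector $v\in T_pG_p$ arises as $v=X^{\#}_p$ for some $X\in\mathbb R^n$, because the orbit $G_p$ is the image of the smooth map $t\mapsto \exp(tX)\cdot p$ for varying $X$. Therefore, it suffices to show that $d\mu_p(X^{\#}_p)=0$ for every $X\in\mathbb R^n$. Since $d\mu_p(X^{\#}_p)\in\mathbb R^n$, this equality can be tested by pairing with an arbitrary $Y\in\mathbb R^n$.

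Now I would use the Hamiltonian hypothesis $\iota_{Y^{\#}}\omega=d\mu^Y=d\langle\mu,Y\rangle$, which is already invoked in the previous proof. Pairing $d\mu_p(X^{\#}_p)$ with $Y$ gives
\begin{equation*}
\langle d\mu_p(X^{\#}_p),Y\rangle = d\mu^Y_p(X^{\#}_p) = (\iota_{Y^{\#}}\omega)_p(X^{\#}_p) = \omega_p(Y^{\#}_p,X^{\#}_p).
\end{equation*}
So the claim reduces to showing that the fundamental vector fields of any two Lie-algebra elements are $\omega$-orthogonal at every point.

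For this, I would use the fact that $\mathbb T^n$ is abelian, so the Lie bracket $[X,Y]=0$ for all $X,Y\in\mathbb R^n$. Combined with the Hamiltonian identity $\{\mu^X,\mu^Y\}=\mu^{[X,Y]}$ (equivalently, the moment map is invariant under the torus action), this forces $\omega(X^{\#},Y^{\#})=\{\mu^X,\mu^Y\}=0$ pointwise. If one prefers to avoid citing the Poisson bracket identity, the same conclusion follows directly: $\mathcal L_{X^{\#}}\mu^Y=\iota_{X^{\#}}d\mu^Y=\omega(Y^{\#},X^{\#})$, and the left-hand side vanishes because $\mu^Y$ is constant along the flow of $X^{\#}$ (a consequence either of the abelianness of $\mathbb T^n$ or of the equivariance of $\mu$).

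There is no serious obstacle here; the argument is essentially a one-line consequence of the Hamiltonian condition plus commutativity of the torus. The only point to be careful about is the logical direction: one must not assume $\mathbb T^n$-invariance of $\mu$ as a separate hypothesis, but rather derive the vanishing $\omega(X^{\#},Y^{\#})=0$ from the abelianness of the Lie bracket together with the defining relation $\iota_{Y^{\#}}\omega=d\mu^Y$. Once that is observed, pairing against arbitrary $Y$ and invoking non-degeneracy of the pairing on $\mathbb R^n$ completes the proof that $d\mu_p$ annihilates $T_pG_p$.
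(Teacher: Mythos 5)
Your argument is correct and is essentially the paper's own proof: both reduce the claim to showing $\langle d\mu_p(X^{\#}_p),Y\rangle=\omega_p(Y^{\#}_p,X^{\#}_p)=0$ for all $X,Y\in\mathbb R^n$, and both obtain this vanishing from the identity $\omega(Y^{\#},X^{\#})=\{\mu^X,\mu^Y\}=\mu^{[X,Y]}$ together with the abelianness of $\mathbb T^n$. No substantive difference.
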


\begin{proof}
By definition, for any $ X\in \mathbb R^n$ and $ v_p\in T_pM^{2n}$, the moment map $\mu:\ M^{2n} \rightarrow \mathbb R^n$ satisfies
$$\omega_p(X_p^{\#},v_p)=\langle d\mu_p(v_p),X \rangle.$$
Thus we have
\begin{eqnarray}
\ker\ d\mu_p=T_pG_p^w,\label{12}
\end{eqnarray}
%\begin{eqnarray}
%Im\ d\mu_p=g^0_p,
%\end{eqnarray}
where $T_pG_p^w=\{v\in T_pM^{2n}~|~\omega_p(v,u)=0,~\forall u\in T_pG_p\}$.
%%%%%%%%%%%%%%%%%%%%%%%%%%%%%%%%%%%%%%%%%%%%%%%%%%%%%%%
%and $g^0_p=\{w~|~\langle w,v\rangle=0,~v\in g_p\}$

For any $X,Y\in \mathbb R^n$, we have
$$
 \langle d\mu_p(X^{\#}),Y \rangle=\omega_p(Y_p^{\#},X_p^{\#})=\langle\mu(p),[X,Y]\rangle=0,
$$
therefore
\begin{eqnarray}
d\mu_p(X^{\#})=0.\label{X0}
\end{eqnarray}
By equations (\ref{12}) and (\ref{X0}), we obtain
\begin{eqnarray}
T_pG_p \subset \ker\ d\mu_p.\label{3}
\end{eqnarray}
\end{proof}

The image of the moment map $\mu$ on $M^{2n}$ is a Delzant polytope. It is natural to ask what is the  image  of the restriction of $\mu$ on the symplectic submanifold $N^{2n-2}$?  We will give a description of the symplectic submanifold $N^{2n-2}$ by the  moment map $\mu$.
\begin{theorem}
Let $(M^{2n},\omega,\mathbb T^n,\mu)$ be a toric manifold corresponding to a Delzant polytope $\Delta^n$. $N^{2k}$ is a $2k$-dimensional $\mathbb T^{k}$-equivariant symplectic submanifold of $M^{2n}$. Then $\mu|_{N^{2k}}$ is a  k-dimensional section of $\Delta^n$ which denoted by $\tilde{\Delta}^{k}$.
\end{theorem}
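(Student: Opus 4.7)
The strategy is to use Theorem 1 to view $N^{2k}$ as a $\mathbb T^k$-toric manifold in its own right, and then to show that the ambient composition $\mu\circ i$ factors through the intrinsic moment map $\mu_N$ via a continuous injection. This will force the image $\mu(i(N^{2k}))$ to be a $k$-dimensional subset of $\Delta^n$ that projects bijectively to the Delzant polytope of $N$ under $\rho^*$---exactly the notion of ``$k$-dimensional section'' the theorem claims.

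First I would invoke Theorem 1 to obtain that $(N^{2k},i^*\omega,\mathbb T^k,\mu_N)$ is a symplectic toric manifold with $\mu_N=\rho^*(\mu\circ i):N^{2k}\to\mathbb R^k$, and denote by $P_N:=\mu_N(N^{2k})\subset\mathbb R^k$ its $k$-dimensional Delzant polytope. The heart of the argument is the factorization claim: whenever $\mu_N(p_1)=\mu_N(p_2)$, one has $\mu(i(p_1))=\mu(i(p_2))$. To see this, use the classical fact that in a symplectic toric manifold two points share the same moment map value if and only if they lie in a common torus orbit (see e.g.\ \cite{MR1853077}); this yields $t\in\mathbb T^k$ with $p_2=\psi_1(t,p_1)$. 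The equivariance diagram (\ref{dg}) then forces $i(p_2)=\psi(\rho(t),i(p_1))$, placing $i(p_1)$ and $i(p_2)$ in a common $\mathbb T^n$-orbit of $M^{2n}$. Since $\mathbb T^n$ is abelian, its coadjoint action on $\mathbb R^n$ is trivial, so $\mu$ is $\mathbb T^n$-invariant and the desired equality follows.

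Consequently, $\mu\circ i$ descends to a well-defined continuous map $f:P_N\to\Delta^n$ with $f\circ\mu_N=\mu\circ i$, and by construction $\rho^*\circ f=\mathrm{id}_{P_N}$. This identity immediately forces $f$ to be injective; since $P_N$ is compact Hausdorff, $f$ is then a homeomorphism onto its image. Hence $\tilde\Delta^k:=f(P_N)=\mu(i(N^{2k}))$ is a $k$-dimensional subset of $\Delta^n$ on which $\rho^*$ restricts to a bijection with $P_N$, realizing $\tilde\Delta^k$ as a $k$-dimensional section of $\Delta^n$ in the required sense.

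The main obstacle is the classical fact invoked in the factorization step---that the fibers of a toric moment map are precisely the torus orbits. This is standard but is not explicitly proved in the preliminaries of the paper, so it would be imported from \cite{MR1853077} (or derived directly from the quotient construction (\ref{1})). Everything else is formal once this fiber description is granted.
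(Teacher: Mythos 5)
Your proposal is essentially correct, but it proves the statement by a genuinely different route than the paper. The paper works pointwise and infinitesimally: it stratifies $N^{2k}$ by the facial submanifolds $L^{2l}$ of its Delzant polytope, computes $\mathrm{rank}\,(\mu\circ i_L)_p=l$ on each open stratum (the lower bound from $\mathrm{rank}\,\mu_N=l$, the upper bound from the lemma $T_pG_p\subset\ker d\mu_p$ together with the free action of an $l$-dimensional subtorus), and then uses the face--isotropy correspondence to force $\partial(\mu\circ i(N^{2k}))\subset\partial\Delta^m$. You instead work globally: using the orbit--fiber correspondence for toric moment maps and the equivariance of $i$ together with $\mathbb T^n$-invariance of $\mu$, you factor $\mu\circ i=f\circ\mu_N$ and observe $\rho^*\circ f=\mathrm{id}_{P_N}$, so $f$ is a topological embedding of the $k$-dimensional polytope $P_N$ into $\Delta^n$ that is literally a section of the projection $\rho^*$. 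Your argument buys something the paper's rank computation does not: injectivity of the induced map on moment images and a global identification of $\tilde\Delta^k$ with a $k$-dimensional convex polytope, rather than only a local statement about ranks on strata. Two points you should still supply to match what the paper extracts from its argument: first, smoothness of $\tilde\Delta^k$ along the open strata is not immediate from your purely topological factorization (it follows because $\mu_N$ is a submersion over the interior of each face of $P_N$, so $f$ is smooth there); second, the assertion that the boundary of $\tilde\Delta^k$ lands in $\partial\Delta^n$ (the endpoints-on-the-boundary condition used later for $k=1$) needs the isotropy inclusion $\rho(T^N_q)\subset T^M_{i(q)}$, which is available from equivariance but is not stated in your write-up. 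Both are easy additions, and the imported fact that toric moment-map fibers are exactly torus orbits is standard, so I see no genuine gap.
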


\begin{proof}
%Firstly, we consider the case of $k=n-1$.

%By the definition of the equivariant symplectic submanifold, there is a subtorus  $\mathbb T^{k}$ acts effectively on $N^{2k}$.

For the toric manifold $N^{2k}$, the image of the moment map $\mu_N=\rho^*(\mu\circ i)$ is a  Delzant polytope which denotes by $\Delta_{N}$. Denote the $l$-dimensional face of $\Delta_N$ by $\hat{\Delta}^{l}$. $L^{2l}=(\mu_N)^{-1}(\hat{\Delta}^{l})$ is a facial submanifold of $N^{2k}$, which is also a toric manifold. Denote the interior of $\hat{\Delta}^{l}$ by $(\hat{\Delta}^{l})^{\circ}$. By the property of the toric manifold, there is a $l$-dimensional subtorus $\mathbb T^l$ acting freely on  $(\mu_N)^{-1}((\hat{\Delta}^{l})^{\circ})$.

For any facial submanifold $L^{2l}$ of $N^{2k}$ corresponding to $\hat{\Delta}^{l}$. Let $i_L=i|_L:\ L^{2l}\rightarrow M^{2n}$ be the imbedding. For any $p\in (\mu_N)^{-1}((\hat{\Delta}^{l})^{\circ})$, we have $\mathrm{rank}\ (\mu_N)_p=\mathrm{rank}\ (\rho^*(\mu\circ i_L))_p=l,$ so
\begin{eqnarray}
\mathrm{rank}\ (\mu\circ i_L)_p\ge l.\label{4}
\end{eqnarray}
In the toric manifold $M^{2n}$ there exists at least $l$-dimensional subtorus $T^l$ acting freely at $p$. Hence $\dim(T_pG_p)\geq l$. By equation (\ref{3}), we have $\dim(\ker\ d\mu_p)\geq l$, so
\begin{eqnarray}
\mathrm{rank}\ (\mu\circ i_L)_p\le l.\label{5}
\end{eqnarray}
Combining equations (\ref{4}) and (\ref{5}), we obtain
\begin{eqnarray}
\mathrm{rank}\ (\mu\circ i_L)_p= l,\label{l}
\end{eqnarray}
for any $p\in (\mu_N)^{-1}((\hat{\Delta}^{l})^{\circ})$.
%%??Thus the dimension of the image $\mu\circ i_L((\mu_N)^{-1}((\hat{\Delta}^{l})^{\circ}))$ is equal to $l$.

For any $q\in N^{2k}$, let $T^N_p$ be the isotropy subgroup of $\mathbb T^{k}$-action at $q\in N^{2k}$. Let $T^M_p$ be the isotropy subgroup  of $\mathbb T^n$-action at $i(q)\in M^{2n}$. Since the map $i:N^{2k}\hookrightarrow M^{2n}$ is a $\mathbb T^{k}$-equivariant embedding, we have  $\rho(T^N_p)=T^M_p\cap \rho(\mathbb T^{k})$.

There is a correspondence between the faces of $\Delta^{n}$ and the isotropy subgroups. Hence, if $\mu\circ i(N^{2k})\subset \Delta^{m}$, then
\begin{eqnarray}
\partial(\mu\circ i(N^{2k}))\subset \partial\Delta^{m},\label{partial0}
\end{eqnarray}
where $\Delta^{m}$ is a $m$-dimensional face of $\Delta^{n}$. As the same as any facial submanifold $L^{2l}$ of $N^{2k}$, if $\mu\circ i_L(L^{2l})\subset \Delta^{m}$, then
\begin{eqnarray}
\partial(\mu\circ i_L(L^{2l}))\subset \partial\Delta^{m},\label{partial}
\end{eqnarray}
where $\Delta^{m}$ is a $m$-dimensional face of $\Delta^{n}$.

%Since manifold $N^{2k}$ is smooth and compact, and $\mu$ is a smooth map, the image $\mu\circ i(N^{2k})$ is smooth.
Combining (\ref{l}) and (\ref{partial}), we obtain that image $\mu\circ i(N^{2k})$ is a $k$-dimensional section of $\Delta^n$ which denoted by $\tilde{\Delta}^{k}$.

\end{proof}

Let $N^{2}$ be a $2$-dimensional $\mathbb S^{1}$-equivariant symplectic submanifold of $M^{2n}$. Then $\mu|_{N^{2}}$ is a $2$-dimensional section of $\Delta^n$, which is denoted by $\tilde{\Delta}^{1}$.

\begin{theorem}
Let $(M^{2n},\omega,\mathbb T^n,\mu)$ be a toric manifold corresponding to a Delzant polytope $\Delta^n$, and let $N^{2}$ be a $2$-dimensional $\mathbb S^{1}$-equivariant symplectic submanifold of $M^{2n}$. Then for all $z\in (\tilde{\Delta}^{1})^{\circ}$, $T_z\tilde{\Delta}^{1}$ is not orthogonal to $\rho_*(\mathbb R^{1})$, where $\rho:\mathbb S^1\hookrightarrow \mathbb T^n$ is the map in diagram (\ref{dg}).
\end{theorem}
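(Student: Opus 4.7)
The plan is to pull the statement back along $\rho^*$. The orthogonality condition "$T_z\tilde\Delta^1 \perp \rho_*(\mathbb R^1)$" is, under the standard inner product identification of $\mathbb R^n$ with its dual, the same as the statement that $\rho^*$ annihilates every vector in $T_z\tilde\Delta^1$. So the goal reduces to showing that $\rho^*$ restricted to $T_z\tilde\Delta^1$ is nonzero; equivalently, that at a suitable preimage point $p\in N^2$ of $z$, the composed differential $\rho^*\circ d(\mu\circ i)_p = d(\rho^*(\mu\circ i))_p = d\mu_{N,p}$ is nonzero.

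First I would pick $p\in N^2$ with $(\mu\circ i)(p)=z$. Because $z$ lies in the relative interior of the 1-dimensional section $\tilde\Delta^1$, the same reasoning used in the previous theorem (equation~(\ref{l}) with $l=1$) identifies this with a point whose $\mathbb S^1$-isotropy is trivial: on the toric manifold $N^2$, interior points of the moment interval $\mu_N(N^2)$ correspond to free $\mathbb S^1$-orbits, and the factoring $\mu\circ i = \mu_N$ through $N^2/\mathbb S^1$ together with the nondegenerate surjection $(\mu\circ i)_*$ on the 1-dimensional tangent space of $\tilde\Delta^1$ shows interior points of $\tilde\Delta^1$ arise from interior points of $\mu_N(N^2)$.

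Next I would use that, at such a free point $p$, the moment map $\mu_N\colon N^2\to\mathbb R$ is a submersion: this is the standard principle that for a Hamiltonian torus action the rank of $d\mu$ at $p$ equals the dimension of the torus minus the dimension of the stabilizer, and it also follows directly from applying the earlier lemma to $N^2$ (so $\ker d\mu_{N,p}=T_pG_p^\omega$, which is 1-dimensional, forcing $d\mu_{N,p}$ to have rank 1). Hence there is some $v\in T_pN^2$ with $d\mu_{N,p}(v)\ne 0$.

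Now I would read off the conclusion: put $w := d(\mu\circ i)_p(v)\in T_z\tilde\Delta^1$. By the chain of identifications $\rho^*(w)=\rho^*d(\mu\circ i)_p(v)=d\mu_{N,p}(v)\ne 0$, so $w$ is not orthogonal to $\rho_*(\mathbb R^1)$, and since $T_z\tilde\Delta^1$ is 1-dimensional this means the whole tangent line is not orthogonal to $\rho_*(\mathbb R^1)$. The only real subtlety — and the place I would be most careful — is justifying that the chosen preimage $p$ of an interior point $z$ indeed has free $\mathbb S^1$-action; once the factorization of $\mu\circ i$ through the orbit space $N^2/\mathbb S^1$ and the bijection with $\tilde\Delta^1$'s interior are in hand, the argument is essentially immediate.
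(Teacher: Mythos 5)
Your argument is essentially the contrapositive of the paper's proof: the paper assumes orthogonality at $z$, applies the moment-map identity $\omega_{i(q)}\bigl(i_*(X^{\#}_q),i_*v\bigr)=\langle d(\mu\circ i)_q(v),\rho_*X\rangle$ restricted to $N$, and uses nondegeneracy of $\omega|_{N}$ to force $X^{\#}_q=0$; you run the same identity forward, packaged as $\rho^*\circ d(\mu\circ i)_p=d\mu_{N,p}$ together with $\ker d\mu_{N,p}=(T_pG_p)^{\omega}$, to produce a vector of $T_z\tilde{\Delta}^{1}$ not annihilated by $\rho^*$. The dualization of ``orthogonal to $\rho_*(\mathbb R)$'' into ``killed by $\rho^*$,'' the reduction to $d\mu_{N,p}\neq 0$ at some preimage $p$, and the final one-dimensionality remark are all correct.

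The one genuine gap is exactly the step you flag: you must exhibit a preimage $p$ of $z$ whose $\mathbb S^1$-orbit is one-dimensional, and your justification is circular as written. The surjectivity of $d(\mu\circ i)_p$ onto $T_z\tilde{\Delta}^{1}$ (the paper's equation (\ref{l})) is established only for points lying over the \emph{interior} of the moment interval of $\mu_N$, which is precisely what you are trying to locate; worse, if the orthogonality you are disproving did hold at $z$, then $d\mu_{N}=\rho^*\circ d(\mu\circ i)$ would vanish on the entire fiber over $z$, so you cannot invoke properties of $\mu_N$ to place that fiber over interior values. A clean repair: the fixed-point set of an effective Hamiltonian $\mathbb S^1$-action on the compact connected surface $N^{2}$ is a symplectic submanifold of positive codimension, hence a finite set of isolated points; if $(\mu\circ i)^{-1}(z)$ consisted only of fixed points it would be finite, so $N^{2}$ minus this fiber would be connected, while its image $\tilde{\Delta}^{1}\setminus\{z\}$ is disconnected for $z$ in the interior of the arc --- a contradiction. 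Hence the fiber contains a non-fixed point, and your argument closes. It is worth noting that the paper's own proof quietly needs the same fact: it shows that every point of the fiber over $z$ is fixed and declares this to ``contradict effectiveness,'' which likewise presupposes that the fiber is not a finite set of fixed points.
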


\begin{proof}
Assume, for contradiction, that there exists $z\in(\widetilde\Delta^{1})^\circ$ such that 
\[
\langle w,\rho_*X\rangle=0\qquad\text{for all }w\in T_z\widetilde\Delta^{1}\ \text{ and all }X\in R.
\]
Pick any $q\in(\mu\circ i)^{-1}(z)\subset N$. Because the inclusion $i:N\hookrightarrow M$ is $\mathbb S^1$--equivariant, the infinitesimal generator $X^\#$ of the $\mathbb S^1$--action (for any $X\in R$) is tangent to $N$ at $q$, so $i_* (X^\#_q)\in T_{i(q)}N$.

Recall the moment map identity on $M$: for any $X\in R^n$ and any $w\in T_{i(q)}M$,
\[
\omega_{i(q)}(X^\#_{i(q)},w)=\langle d\mu_{i(q)}(w),X\rangle.
\]
Restricting to $N$ and using $d(\mu\circ i)_q = d\mu_{i(q)}\circ i_*$, we get for every $v\in T_qN$ and every $X\in R$,
\[
\omega_{i(q)}(i_*(X^\#_q),i_*v)=\langle d(\mu\circ i)_q(v),\rho_*X\rangle.
\]
By the assumed orthogonality, the right-hand side vanishes for all $v\in T_qN$ and all $X\in R$. Hence for each $X\in R$,
\[
\omega_{i(q)}(i_*(X^\#_q),i_*v)=0\quad\text{for all }v\in T_qN.
\]
But $i_* (X^\#_q)\in T_{i(q)}N$ (the generator is tangent) and the restriction $\omega|_{T_{i(q)}N}$ is nondegenerate because $N$ is a symplectic submanifold. Therefore the only vector in $T_{i(q)}N$ that pairs to zero with all $i_*v$ under $\omega$ is the zero vector; hence $i_*(X^\#_q)=0$ for all $X\in R$. This contradicts effectiveness of the $\mathbb S^1$--action (or nontriviality of the induced infinitesimal action) on $N$. 

Thus no such $z$ exists, completing the proof.
\end{proof}

Now let us discuss which curves admit an $S^1$-equivariant symplectic lift in the preimage $\mu^{-1}(\tilde{\Delta}^{1})$.

For a symplectic manifold $(M^{2n},\omega)$ with a Hamiltonian $\mathbb{T}^{m}$-action, let $\mu$ be its moment map. Let $\mathcal{T}\subset \mathbb{R}^n$ be an open convex set which contains $\mu(M)$. The quadruple $(M^{2n},\omega,\mu,\mathcal{T})$ is a proper Hamiltonian $\mathbb{T}^m$-manifold if $\mu$ is proper as a map to $\mathcal{T}$, that is, the preimage of every compact subset of $\mathcal{T}$ is compact. A proper Hamiltonian $T$-manifold $(M,\omega,\mu,\mathcal{T})$ is centered about a point $\alpha\in \mathcal{T}$ if $\alpha$ is contained in the closure of the moment map image of every orbit type stratum.

Let $T^n$ act on $(\mathbb{C}^n,\omega_0=\frac{i}{2}\sum_{k=1}^n dz_k\wedge d\bar{z}_k)$ by
\[
(e^{it_1},\cdots,e^{it_n})(z_1,\cdots,z_n)=(e^{-i(\alpha_1\cdot t)}z_1,\cdots,e^{-i(\alpha_n\cdot t)}z_n),
\]
where $-\alpha_i$ $(i=1,\dots,n)$ are the isotropy weights. The moment map is
\[
\mu(z)=\frac{1}{2}\sum_{k=1}^n |z_k|^2\alpha_k.
\]

\begin{theorem}[{\cite[Proposition 2.8]{KarshonTolman2005}}]\label{Gromov width}
Let $(M^{2n},\omega,\mu,\mathcal{T})$ be a proper Hamiltonian $T^m$-manifold. Assume that $M^{2n}$ is centered about $\eta\in \mathcal{T}$ and that $\mu^{-1}(\eta)$ consists of a single fixed point $p$. Then $M^{2n}$ is equivariantly symplectomorphic to
\[
\{ z\in \mathbb{C}^n\mid \eta+\tfrac12\sum |z_j|^2\alpha_j\in \mathcal{T} \},
\]
where $\alpha_1,\cdots,\alpha_n$ are the isotropy weights at $p$.
\end{theorem}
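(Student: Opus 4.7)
The plan is the classical two-stage approach: first build an equivariant symplectic model in a neighborhood of the unique fixed point $p$, then extend it globally using the \emph{centered} hypothesis and properness of $\mu$.

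\emph{Step 1 (Local model at $p$).} Because $p$ is a $T^m$-fixed point, $T_pM$ is a symplectic $T^m$-representation; decomposing into weight spaces yields an equivariant linear symplectic isomorphism $T_pM \cong \mathbb{C}^n$ under which $T^m$ acts with weights $-\alpha_1,\dots,-\alpha_n$ and the quadratic part of $\mu-\eta$ at $p$ takes the form $\tfrac{1}{2}\sum|z_j|^2\alpha_j$. By the equivariant Darboux theorem (the Marle--Guillemin--Sternberg normal form specialised to a fixed point), this extends to a $T^m$-equivariant symplectomorphism $\Phi_0 : U \to U'$ from an open neighborhood $U \subset M$ of $p$ onto an open neighborhood $U' \subset \mathbb{C}^n$ of $0$, which intertwines the moment maps: $(\mu-\eta)|_U = (\tfrac{1}{2}\sum|z_j|^2\alpha_j)\circ \Phi_0$.

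\emph{Step 2 (The standard model is itself proper and centered).} Set $W := \{z \in \mathbb{C}^n : \eta + \tfrac{1}{2}\sum |z_j|^2\alpha_j \in \mathcal{T}\}$, with symplectic form $\omega_0$ and moment map $\mu_W(z) = \eta + \tfrac{1}{2}\sum|z_j|^2\alpha_j$. Since $\mathcal{T}$ is open, convex, and contains $\eta$, $W$ is a $T^m$-invariant, star-shaped open neighborhood of $0$, and $(W,\omega_0,\mu_W,\mathcal{T})$ is itself a proper Hamiltonian $T^m$-manifold centered about $\eta$, with $\mu_W^{-1}(\eta) = \{0\}$ a single fixed point of isotropy weights $\alpha_1,\dots,\alpha_n$. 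Thus $M$ and $W$ carry identical local data at their distinguished fixed points, and the theorem reduces to producing a global equivariant symplectomorphism $M \cong W$ extending $\Phi_0$.

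\emph{Step 3 (Global extension).} I would exploit the radial dilation $\sigma_s(z) = sz$ on $W$ for $s \in (0,1]$, which is $T^m$-equivariant, contracts any compact subset of $W$ into $U'$ as $s \to 0$, and obeys $\mu_W \circ \sigma_s = \eta + s^2(\mu_W - \eta)$. Pulling the time-dependent generator of $\sigma_s$ back through $\Phi_0$ gives a $T^m$-invariant Hamiltonian vector field defined near $p$ on $M$; one extends it globally using the moment-map identity $\iota_{X^\#}\omega = d\mu^X$, so that its generating Hamiltonian is an explicit function of $\mu - \eta$. Properness of $\mu$ over $\mathcal{T}$ makes the resulting reverse-time flow $\widetilde\sigma_s$ complete on $M$, and the centered hypothesis ensures that every orbit-type stratum is swept into $U$ for sufficiently small $s$. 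The map $\Phi := \sigma_s^{-1} \circ \Phi_0 \circ \widetilde\sigma_s$ is then, by a Moser-type interpolation, independent of $s$, globally well-defined, equivariant, and symplectic, and delivers the claimed isomorphism $M \cong W$.

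\emph{Main obstacle.} The core difficulty is Step 3: making the dilation argument rigorous both across the fixed point and at infinity. The generating Hamiltonian vanishes to second order at $p$, so the flow degenerates there, and its smooth extension must be verified against the explicit model of Step 1. At the other end, one must combine properness of $\mu$ with the centered condition — the former providing compactness of moment preimages, the latter guaranteeing that no orbit stratum avoids a neighborhood of $\eta$ — to ensure that the reverse-time flow $\widetilde\sigma_s$ genuinely covers all of $M$. Once these two points are in hand, the remaining assertions (equivariance, symplecticity, and intertwining of moment maps) reduce to routine invariance checks and a standard equivariant Moser argument.
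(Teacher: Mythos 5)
First, a point of comparison: the paper does not prove this statement at all --- it is imported verbatim from Karshon--Tolman \cite{KarshonTolman2005} (their Proposition 2.8) and used as a black box --- so there is no in-paper argument to measure yours against. On its own terms, your Steps 1 and 2 are correct and match the standard first half of the known proof: the equivariant Darboux / local normal form theorem at the isolated fixed point $p$ identifies an invariant neighborhood of $p$ with a neighborhood of $0$ in the weight representation $\mathbb{C}^n$, intertwining the moment maps, and the model $W$ is indeed itself a proper Hamiltonian $T^m$-manifold centered about $\eta$ with the same local data at $0$.

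The genuine gap is in Step 3. The radial dilation $\sigma_s(z)=sz$ is not a symplectomorphism: $\sigma_s^*\omega_0=s^2\omega_0$, and its generator is the Euler (Liouville) vector field $E$ with $\iota_E\omega_0=\sum_j(x_j\,dy_j-y_j\,dx_j)$, which is not closed. So $E$ is not Hamiltonian, and the assertion that ``its generating Hamiltonian is an explicit function of $\mu-\eta$'' is false --- no generating Hamiltonian exists, and the moment-map identity $\iota_{X^\#}\omega=d\mu^X$ only produces the vector fields of the torus action itself, which are useless for contracting $M$ onto $U$. To salvage the scheme you would need a globally defined $T^m$-invariant Liouville vector field on $M$ whose backward flow is complete and sweeps all of $M$ into $U$; this presupposes that $\omega$ is exact on $M$ (true only a posteriori, once $M\cong W$ is established) and, more seriously, you have not connected the centered hypothesis --- a condition on the closures of the moment images of the orbit-type strata --- to the completeness and surjectivity of such a flow. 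The proof in Karshon--Tolman avoids the dilation entirely: it reduces the statement to a global uniqueness result (their Proposition 2.7) saying that a proper Hamiltonian $T$-manifold centered about $\eta$ is determined up to equivariant symplectomorphism by an invariant neighborhood of $\mu^{-1}(\eta)$, which is proved stratum by stratum from the local normal forms rather than by a Moser-type interpolation. As written, Step 3 does not go through; you would need either to prove that uniqueness statement or to supply the missing Liouville-flow analysis.
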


For the toric manifold $(M^{2n},\omega)$ and a vertex $p$ of its Delzant polytope $\Delta^n$, we delete the faces that do not contain the vertex $p$ from the Delzant polytope $\Delta^n$ and denote it by $\Lambda_p$. Then $(\mu^{-1}(\Lambda_p),\omega,\mu,\Lambda_p)$ is a proper Hamiltonian $T^n$-manifold centered about the point $p$.

Let $l$ be a curve segment in the Delzant polytope $\Delta^n$, with endpoints $v_1, v_2$ lying on the boundary $\partial \Delta^n$. We now discuss what conditions $l$ must satisfy if there exists a smooth $S^1$-equivariant surface $\mathcal{S}$ in the inverse image $\mu^{-1}(l)$.

Denote by $F_{v_1}$ the lowest-dimensional face of $\Delta^n$ containing $v_1$, and let $o$ be a vertex of $F_{v_1}$. Suppose $\alpha_1,\cdots,\alpha_n$ are the isotropy weights of $T^n$ at $\mu^{-1}(o)$. Define coordinates $\{x_1,\cdots,x_n\}$ on $\Lambda_o$ with respect to the basis $\{\alpha_1,\cdots,\alpha_n\}$. Consider the curve segment $l/\{v_2\}$ in $\Lambda_o$. Suppose the coordinates of $l/\{v_2\}$ are
\begin{equation}\label{gx}
(x_1,g_2(x_1),\cdots,g_{n}(x_1)),\qquad x_1\ge 0.
\end{equation}

Suppose the lowest-dimensional subspace containing $F_{v_1}$ is spanned by $\alpha_{i_1}, \dots, \alpha_{i_l}$. Define the index set
\[
Q=\{i_1, \dots, i_l\}.
\]

By Theorem \ref{Gromov width}, $\mu^{-1}(\Lambda_o)$ is equivariantly symplectomorphic to
\[
\{ z\in \mathbb{C}^n\mid \tfrac12\sum |z_j|^2\alpha_j\in \mathcal{T}\},
\]
where $\alpha_1,\cdots,\alpha_n$ are the isotropy weights at $o$. Employ the coordinate system $\{z_1,\cdots,z_n\}$ on $\mu^{-1}(\Lambda_o)$. Let $\mathcal{S}_{v_1}$ be an $S^1$-equivariant surface in the inverse image $\mu^{-1}(l/\{v_2\})$. Suppose $S^1$ acts on $\mathcal{S}_{v_1}$ by
\begin{equation}\label{ki}
e^{it}(z_1,\cdots,z_n)=(e^{-ik_1t}z_1,\cdots,e^{-ik_nt}z_n).
\end{equation}

\begin{theorem}\label{wen}
Assume the settings and notations introduced above. There exists a smooth $S^1$-equivariant surface $\mathcal{S}_{v_1}$ in the inverse image $\mu^{-1}(l/\{v_2\})$ if and only if

1) For $x_1 \neq 0$, we have
$
y=\sqrt{g_i(x_1^2)}
$
is smooth.

2) For $x_1 = 0$, we have $k_1\neq 0$, and

\quad i) When $i \in Q$, we have
$
k_i = 0,
$
and
$
y=\sqrt{ g_j\!(x_1^2)}~(x_1 \in R)
$
is smooth at $x_1=0$.

\quad ii) When $i \in \{1, \cdots, n\}/Q$, we have
$
{k_i}/{k_1} \in \mathbb{Z},
$
$
y=\sqrt{ g_i\!(x_1^2)}/x_1^{{k_i}/{k_1}}~(x_1\geq0)
$
is smooth at $x_1=0$, and the derivatives of odd orders at $x_1=0$ are all zero.
\end{theorem}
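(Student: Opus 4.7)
The plan is to transport the question to the Karshon--Tolman local model of Theorem~\ref{Gromov width}: identify $(\mu^{-1}(\Lambda_o),\omega)$ with $\{z\in\mathbb{C}^n:\tfrac12\sum|z_j|^2\alpha_j\in\Lambda_o\}$ carrying the standard symplectic form and moment map $\mu(z)=\tfrac12\sum|z_j|^2\alpha_j$. In this chart the preimage $\mu^{-1}(l\setminus\{v_2\})$ is cut out by $|z_1|^2=2x_1$ and $|z_j|^2=2g_j(x_1)$ for $j\ge 2$, and a smooth $S^1$-equivariant surface $\mathcal{S}_{v_1}$ corresponds to a smooth $S^1$-equivariant map $w\mapsto (z_1(w),\ldots,z_n(w))$ from a 2-disk $D\subset\mathbb{C}$ into $\mathbb{C}^n$ landing in this preimage. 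The strategy is to analyze $\mathcal{S}_{v_1}$ separately at $x_1=0$ and at $x_1>0$, extract necessary conditions in each case, and then verify that these conditions suffice to reconstruct the surface.

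Near the vertex ($x_1=0$), the point $p\in\mathcal{S}_{v_1}\cap\mu^{-1}(v_1)$ must be a fixed point of the $S^1$-action, since otherwise the smooth surface would be forced to extend past the endpoint $v_1$ of $l$. Because $z_i(p)\ne 0$ exactly for $i\in Q$, the fixed-point equation $e^{-ik_it}z_i(p)=z_i(p)$ yields $k_i=0$ for all $i\in Q$, while nontriviality of the $S^1$-action on $\mathcal{S}_{v_1}$ together with the $x_1$-direction being transverse to $F_{v_1}$ forces $k_1\ne 0$. Choosing a complex coordinate $w$ on $\mathcal{S}_{v_1}$ adapted to the fixed point, the standard classification of smooth $S^1$-equivariant maps $\mathbb{C}\to\mathbb{C}$ writes
\[
z_j(w)=w^{k_j/k_1}h_j(|w|^2)
\]
for $j\notin Q$ with $k_j/k_1\ge 0$ (and the complex-conjugate analogue for negative exponent), well-defined only when $k_j/k_1\in\mathbb{Z}$; for $j\in Q$ one has $z_j(w)=h_j(|w|^2)$ with $h_j(0)\ne 0$. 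Substituting $|z_1|^2=|w|^{2k_1}|h_1(|w|^2)|^2=2x_1$ and inverting (via the implicit function theorem using $h_1(0)\ne 0$) gives $|w|^2=Y(x_1)$ smooth with $Y(0)=0$; inserting this into $|z_j|^2=2g_j(x_1)$ produces $g_j(x_1)=x_1^{k_j/k_1}\tilde g_j(x_1)$ with $\tilde g_j$ smooth. Re-expressing via the $x_1\leftrightarrow x_1^2$ substitution then shows $\sqrt{g_j(x_1^2)}/x_1^{k_j/k_1}=\sqrt{\tilde g_j(x_1^2)}$ is a smooth \emph{even} function of $x_1$, yielding both the smoothness and the odd-derivative-vanishing clauses of 2)ii); the analogous calculation for $j\in Q$ gives $\sqrt{g_j(x_1^2)}$ smooth.

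Away from $v_1$ (condition 1)), the $S^1$-action is free on $\mathcal{S}_{v_1}$, so the surface is locally a cylinder; the same substitution, with $|z_1|=\sqrt{2x_1}$ playing the role of a transverse parameter, shows that smoothness of the radial coordinates $|z_j|$ pulls back to the smoothness of $\sqrt{g_i(x_1^2)}$. The converse (sufficiency) is obtained by reversing these computations: given data satisfying the stated conditions, use the smooth even functions $\sqrt{\tilde g_j(x_1^2)}$ to define $h_j$, construct $z_j(w)$ by the explicit formulas above, and verify directly that the resulting map is smooth, $S^1$-equivariant, and sends $D$ into $\mu^{-1}(l\setminus\{v_2\})$. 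The principal obstacle is the odd-derivative-vanishing clause in 2)ii): both directions depend on recognizing that this parity is exactly the imprint of $h_j$ being a function of $|w|^2$ alone, and translating smoothly between these two viewpoints --- while simultaneously tracking the various integer exponents $k_j/k_1$ --- is the step that requires the most care.
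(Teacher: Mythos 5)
Your proposal follows essentially the same route as the paper: pass to the Karshon--Tolman local model, parametrize the surface equivariantly near the point of $\mathcal{S}_{v_1}$ over $v_1$, and read off $k_1\neq 0$, the vanishing $k_i=0$ for $i\in Q$, the integrality of $k_j/k_1$, and the evenness/smoothness of $\sqrt{g_j(x_1^2)}/x_1^{k_j/k_1}$ from the structure of smooth $S^1$-equivariant map germs $\mathbb{C}\to\mathbb{C}$ (the paper derives exactly this germ structure by hand, via the Taylor expansion of $S_j$ in $\zeta=x_1e^{ik_1t}$ and $\bar\zeta$, rather than quoting it). The one slip worth fixing is the exponent in ``$|z_1|^2=|w|^{2k_1}|h_1(|w|^2)|^2$'': by your own ansatz $z_j(w)=w^{k_j/k_1}h_j(|w|^2)$ the $j=1$ case gives $|z_1|^2=|w|^{2}|h_1(|w|^2)|^2$, and only with this exponent does the implicit-function-theorem inversion $|w|^2=Y(x_1)$ go through.
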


\begin{proof}
Since the coordinates of $l/\{v_2\}$ are $(x_1,g_2(x_1),\cdots,g_{n}(x_1))$, by Theorem \ref{Gromov width}, the inverse image $\mu^{-1}(l/\{v_2\})$ satisfies
$$
\left\{
\begin{aligned}
&\frac{|z_2|^2}{2}=g_2\!\left(\frac{|z_1|^2}{2}\right),\\
&\qquad\vdots\\
&\frac{|z_{n-1}|^2}{2}=g_{n}\!\left(\frac{|z_1|^2}{2}\right).
\end{aligned}
\right.
$$

Now, consider the real coordinate system of $\mathbb{C}^n$.
To find smooth $S^1$-equivariant surfaces in the inverse image $\mu^{-1}(l/\{v_2\})$, it suffices to discuss the smoothness of the surfaces generated by rotating
$$
\bar{l}=\left(x_1,0,\sqrt{2g_2(\frac{x_1^2}{2})},0,\cdots,\sqrt{2g_{n}(\frac{x_1^2}{2})},0\right)
$$
along
$(e^{ik_1t},\cdots,e^{ik_{n}t})$.
In other words, we only need to examine the smoothness of the surface
\begin{align}
\label{smooth1}
\begin{split}
F(x_1, t)=\left(x_1\cos k_1t, x_1\sin k_1t, \sqrt{2g_2(\frac{x_1^2}{2})}\cos k_2t, \sqrt{2g_2(\frac{x_1^2}{2})}\sin k_2t,\right.\\
\left.\cdots, \sqrt{2g_{n}(\frac{x_1^2}{2})}\cos k_nt, \sqrt{2g_{n}(\frac{x_1^2}{2})}\sin k_nt \right)
\end{split}
\end{align}
at the original points of $\mathbb{C}^n$.

1) The surface $F(x_1,t)$ defined by (\ref{smooth1}) is smooth at $x_1\neq 0$ if and only if $y=\sqrt{2g_i(\frac{x_1^2}{2})}$ is smooth, i.e., for all $i$, we have
$$
y=\sqrt{g_i(x_1^2)}
$$
is smooth at any $x_1\neq 0$.

2) When $x_1=0$, consider the tangent vector:
$$
F_{x_1}'=(\cos k_1t, \sin k_1t, a_2\cos k_2t, a_2\sin k_2t, \cdots, a_{n}\cos k_nt, a_{n}\sin k_nt),
$$
where $a_2=\left(\sqrt{2g_2(\frac{x_1^2}{2})}\right)'|_0, \cdots, a_{n}=\left(\sqrt{2g_{n}(\frac{x_1^2}{2})}\right)'|_0$.

\textbf{Case 1.} The endpoint $v_1$ of  $l$ is the origin.

We argue by contradiction to show that $k_1\neq 0$. If $k_1=0$, we have
\begin{align}
	\label{smooth2}
	\begin{split}
		F(x_1, t)=\left(x_1,0, \sqrt{2g_2(\frac{x_1^2}{2})}\cos k_2t, \sqrt{2g_2(\frac{x_1^2}{2})}\sin k_2t,\right.\\
		\left.\cdots, \sqrt{2g_{n}(\frac{x_1^2}{2})}\cos k_nt, \sqrt{2g_{n}(\frac{x_1^2}{2})}\sin k_nt \right)
	\end{split}
\end{align}
We first show that $a_i=0, \forall i\in \{2,3,\ldots n\}$. By contradiction,we assume that there exists $i_0\in\{2,\cdots,n\}$ such that $a_{i_0}\neq 0$.

If $F(x_1, t)$ is smooth at the origin of $\mathbb{C}^n$, then the tangent vector
$$
F_{x_1}'(0,t) \in \text{the tangent space of the surface at origin}, \quad\forall t\in\mathbb{R}.
$$
For $k_{i_0}t=0, \frac{\pi}{2}, \pi, \frac{3\pi}{2}$, the following four vectors
\begin{alignat*}{6}
	&  &v_0 &= (1,0  && \ldots &&a_{i_0} &&0 &&\ldots)\\
		&  &v_{\frac{\pi}{2}} &= (1,0  && \ldots &&-a_{i_0} &&0 &&\ldots)\\
 	&  &v_{\pi} &= (1,0  && \ldots && 0&&a_{i_0} &&\ldots)\\
 &  &v_{\frac{3\pi}{2}} &= (1,0  && \ldots && 0&&-a_{i_0}&&\ldots)\\
\end{alignat*}
expand at least a 3-dim vector space, contradicts. In fact, it consists a cone at the origin.

\begin{tikzpicture}[scale=0.85]
    \begin{groupplot}[
        group style={group size=2 by 1, horizontal sep=1.5cm},
        width=6cm, height=5cm,
        view={45}{25},
        trig format=rad,
        samples=40,
        samples y=20,
        axis equal,
    ]

    % ---------------- 旋转曲面 ----------------
    \nextgroupplot[
        title={Desired situation},
        xlabel={$x_{i_0}$}, ylabel={$y_{i_0}$}, zlabel={$x_1$},
        xtick=\empty, ytick=\empty, ztick=\empty,
    ]
    \addplot3[
        surf, shader=interp,
        domain=0:2*pi,
        y domain=0:1.2
    ]
    ({y*cos(x)}, {y*sin(x)}, {y^2});

    % ---------------- 锥面 ----------------
    \nextgroupplot[
        title={The Case $a_{i_0}\neq 0$},
        xlabel={$x_{i_0}$}, ylabel={$y_{i_0}$}, zlabel={$x_1$},
        xtick=\empty, ytick=\empty, ztick=\empty,
    ]
    \addplot3[
        surf, shader=interp,
        domain=0:2*pi,
        y domain=0:1.2
    ]
    ({y*cos(x)}, {y*sin(x)}, {2*y});

    \end{groupplot}
\end{tikzpicture}

However, if $a_{i}=0$ for all $i\in\{2,\cdots,n\}$, then for any $t$, the tangent vectors
$$
F_{x_1}'(0,t)=(1,0,a_2\cos k_2t, a_2\sin k_2t, \cdots, a_{n}\cos k_nt, a_{n}\sin k_nt)
$$
can be written as
$$
(1,0,0,0,\cdots,0,0).
$$
They cannot span a tangent plane. Therefore, $k_1\neq 0$.

Now we consider the smoothness of the surface
$$
\bar{F}(x_1,t)=\left(x_1 \cos k_1 t, x_1 \sin k_1 t, \sqrt{2 g_j(\frac{x_1^2}{2})} \cos k_j t, \sqrt{2 g_j(\frac{x_1^2}{2})} \sin k_j t\right).
$$
Since the parameter $(x_1,t)$ is not $1-1$ for the origin.
We consider the new parameter:
$u = x_1\cos k_1 t$, \quad $v = x_1\sin k_1 t$,
$$
\zeta := u + i v = x_1 e^{i k_1 t}, \qquad
r = |\zeta| = |x_1|, \qquad \phi = \arg \zeta = k_1 t,
$$
where $r>0$, and $t\in \mathbb{R}$.

For any $j$, rewrite
\[
\cos(k_j t) = \cos\!\Big(\frac{k_j}{k_1}\phi\Big),\qquad
\sin(k_j t) = \sin\!\Big(\frac{k_j}{k_1}\phi\Big),
\]

For the expressions above to be single-valued functions of $(u,v)$, we must have
\[
\cos\!\Big(\frac{k_j}{k_1}(\phi + 2\pi)\Big) = \cos\!\Big(\frac{k_j}{k_1}\phi\Big)
\]
for all $\phi$. This is equivalent to
$
2\pi \frac{k_j}{k_1} = 2\pi m_j, (m_j\in\mathbb{Z}),
$
i.e.,
\begin{eqnarray} \label{m}
\dfrac{k_j}{k_1} = m_j\in \mathbb{Z}.
\end{eqnarray}

Let
\begin{eqnarray} \label{Sr}
S_j(u,v)=S_j(r,\phi) = \sqrt{2 g_j\!\Big(\frac{x_1^2}{2}\Big)} \, e^{i m_j \phi} = \sqrt{2 g_j\!\Big(\frac{r^2}{2}\Big)} \, e^{i m_j \phi}.
\end{eqnarray}
We have
\begin{align*}
	S_j(u,v)=\sum_{k=0}^NP_k(u,v)+R_N(u,v)
\end{align*}
where $P_k$ is k-th homogeneous polynomial, i.e.
\begin{align*}
	P_k(u,v)=\sum_{p=0}^k\frac{\partial_u^i\partial_v^{k-p}S_j(0,0)}{p!(k-p)!}u^pv^{k-p}=\sum_{p+q=k} a_{pq}u^pv^{q}
\end{align*}
and $R_N$ is the N-th reminder term, i.e.,
\begin{align*}
	R_N= O(r^{N+1})=O((u^2+v^2)^{\frac{N+1}{2}}).
\end{align*}
Since $u=\frac{z+\bar z}{2}, v=\frac{z-\bar z}{2i}$, we have
\begin{align*}
	P_k(u,v) =\sum_{p+q=k} b_{pq}z^p\bar z^q=\sum_{p+q=k}b_{pq}r^ke^{i(p-q)\phi}.
\end{align*}
On the other hand, $S_j(u,v)  = \sqrt{2 g_j\!\Big(\frac{r^2}{2}\Big)} \, e^{i m_j \phi}$. Hence $p-q=m_j$ in the above expression, i.e.,
\begin{align*}
	P_k(u,v)  = b_{q+m_j,q}r^ke^{im_j\phi}, \quad k=2q+m_j
\end{align*}
and
\begin{align*}
	S_j(r,\phi)=\sum_{q=0}^nb_{q+m_j,q}r^{m_j+2q}e^{im_j\phi}+R_{m_j+2n}(u,v),
\end{align*}
i.e.,
$$
S_j(r,\phi) \sim r^{m_j} e^{i m_j \phi} \sum_{q\ge 0} b_{q+m_j,q}\, r^{2q}.
$$
where the symbol `$\sim$' indicates equality of jets (equality of all partial derivatives at $0$).

By (\ref{Sr}), we obtain
$$
\sqrt{2 g_j\!\Big(\frac{r^2}{2}\Big)}/r^{m_j} \sim  \sum_{q\ge 0} b_{q+m_j,q}\, r^{2q}.
$$

That is, $\sqrt{ g_j\!(x_1^2)}/x_1^{m_j}~(x_1\geq0)$ is smooth at $x_1=0$, and the derivatives of odd orders at $x_1=0$ are all zero.

Conversely,  if $\sqrt{ g_j\!(x_1^2)}/x_1^{m_j}~(x_1\geq0)$ is smooth at $x_1=0$, and the derivatives of odd orders at $x_1=0$ are all zero, then there is a set of numbers $a_{q+{m_j},q},~q\leq 0$ that makes
$$
\sqrt{2 g_j\!\Big(\frac{r^2}{2}\Big)}/r^{m_j} \sim  \sum_{q\ge 0} a_{q+{m_j},q}\, r^{2q}.
$$

Therefore
$$
S_j(\zeta,\bar{\zeta}) =S_j(r,\phi)\sim r^{m_j} e^{i {m_j} \phi} \sum_{q\ge 0} a_{q+{m_j},q}\, r^{2q}=\sum_{q\ge 0} a_{q+{m_j},q}\, \zeta^{q+{m_j}} \bar{\zeta}^q.$$

Hence, the coordinate pair
$$
(x_1 \cos k_1 t, x_1 \sin k_1 t, \sqrt{2 g_j(\frac{x_1^2}{2})} \cos k_j t, \sqrt{2 g_j(\frac{x_1^2}{2})} \sin k_j t)
$$
is smooth at $x_1 = 0$. 

\textbf{Case 2.} The endpoint $v_1$ of  $l$ is not the origin.\

Since $\mathcal{S}_{v_1}$ is a smooth $S^1$-equivariant surface in $\mathbb{C}^n$, we have $Q \neq \{1,2,\cdots,n\}$.

We use a proof by contradiction to show that the tangent direction of 
$l$ at $v_1$ cannot be parallel to the face $F_{v_1}$.

Suppose that the tangent vector of $l$ at $v_1$ is parallel to the face $F_{v_1}$.Without loss of generality, assume that $1 \in Q$. By the definition of a toric manifold, $l$ cannot rotate in the toric manifold corresponding with $F_{v_1}$. Then the coordinates of $\mathcal{S}_{v_1}$ can be expressed as:
\begin{eqnarray}
\left(x_1,0,\sqrt{2g_2(\frac{x_1^2}{2})},0,\cdots,\sqrt{2g_i(\frac{x_1^2}{2})}\cos k_i t,\sqrt{2g_i(\frac{x_1^2}{2})}\sin k_i t,\cdots\right) \label{smooth2}
\end{eqnarray}
where $i \in \{1,2,\cdots,n\}/Q$.

The tangent vectors
$$
(1,0,a_2,0,\cdots,a_i \cos k_i t, a_i \sin k_i t,\cdots)
$$
must lie in the same plane, where $a_2=(\sqrt{2g_2(\frac{x_1^2}{2})})'|_0, \cdots, a_n=(\sqrt{2g_n(\frac{x_1^2}{2})})'|_0$. However, for $k_i t = 0, \frac{\pi}{2}, \pi, \frac{3\pi}{2}$, the following four vectors
$$
\begin{aligned}
&v_0 = (1,0,a_2,0,\cdots,a_i,0,\cdots)\\
&v_{\frac{\pi}{2}} = (1,0,a_2,0,\cdots,-a_i,0,\cdots)\\
&v_{\pi} = (1,0,a_2,0,\cdots,0,a_i,\cdots)\\
&v_{\frac{3\pi}{2}} = (1,0,a_2,0,\cdots,0,-a_i,\cdots)
\end{aligned}
$$
cannot lie in the same plane. 

Hence, the tangent vector of $l$ at $v$ is not parallel to the face $F_{v_1}$.

Without loss of generality, assume that $1 \notin Q$. The coordinates of $l$ can be expressed as $(x_1, \sqrt{2g_2(\frac{x_1^2}{2})},\cdots, \sqrt{2g_n(\frac{x_1^2}{2})})$.

Rotate $\bar{l} = (x_1,0,\sqrt{2g_2(\frac{x_1^2}{2})},0,\cdots,\sqrt{2g_n(\frac{x_1^2}{2})},0)$ along $(e^{i k_1 t},\cdots,e^{i k_n t})$.  By the definition of a toric manifold, $l$ cannot rotate in the toric manifold corresponding with $F_{v_1}$.  Then when $i \in Q$, we have  $e^{i k_i t} = 1$. That is 
$$
k_i = 0 \qquad (i \in Q).
$$

The coordinates of $\mathcal{S}_{v_1}$ can be expressed as:
\begin{eqnarray}
(x_1 \cos k_1 t, x_1 \sin k_1 t, \sqrt{2g_2(\frac{x_1^2}{2})} \cos k_2 t, \sqrt{2g_2(\frac{x_1^2}{2})} \sin k_2 t, \cdots, \sqrt{2g_i(\frac{x_1^2}{2})}, 0, \cdots) \label{smooth2}
\end{eqnarray}
where $i \in Q$.

i) When $i \in Q$, the coordinate pair
\[
(x_1 \cos k_1 t, x_1 \sin k_1 t, \sqrt{2g_j(\frac{x_1^2}{2})}, 0)
\]
is smooth at the origin if and only if
$$
k_1\neq 0,
$$
and
$$y =\sqrt{2g_i(\frac{x_1^2}{2})}\qquad x_1 \in R$$
is smooth at $x_1 = 0$.

That is
$$
k_1\neq 0,
$$
and
$$
y=\sqrt{ g_j\!(x_1^2)}\qquad x_1 \in R
$$
is smooth at $x_1=0$.

ii) According to the proof of Case 1, when $i \in \{1,2,\cdots,n\}/Q$, the coordinate pair
$$
(x_1 \cos k_1 t, x_1 \sin k_1 t, \sqrt{2 g_j(\frac{x_1^2}{2})} \cos k_j t, \sqrt{2 g_j(\frac{x_1^2}{2})} \sin k_j t)
$$
is smooth at $x_1 = 0$ if and only if
$$
k_1\neq 0,\quad{k_i}/{k_1} \in \mathbb{Z},
$$
$$
y=\sqrt{ g_i\!(x_1^2)}/x_1^{{k_i}/{k_1}}~ \qquad  x_1\geq0
$$
is smooth at $x_1=0$, and the derivatives of odd orders at $x_1=0$ are all zero.

In conclusion,

1) For $x_1 \neq 0$, we have
$
y=\sqrt{g_i(x_1^2)}
$
is smooth.

2) For $x_1 = 0$, we have $k_1\neq 0$, and

\quad i) when $i \in Q$, we have
$
k_i = 0,
$
and
$
y=\sqrt{ g_j\!(x_1^2)}~(x_1 \in R)
$
is smooth at $x_1=0$.

\quad ii) when $i \in \{1, \cdots, n\}/Q$, we have
$
{k_i}/{k_1} \in \mathbb{Z},
$
$
y=\sqrt{ g_i\!(x_1^2)}/x_1^{{k_i}/{k_1}}~(x_1\geq0)
$
is smooth at $x_1=0$, and the derivatives of odd orders at $x_1=0$ are all zero.

\end{proof}

Combining the above analysis, we have

\begin{theorem}
Let $(M^{2n},\omega,\mathbb T^n,\mu)$ be a toric manifold corresponding to a Delzant polytope $\Delta^n$. Then $N^{2}$ is a $2$-dimensional smooth $\mathbb S^{1}$-equivariant symplectic submanifold of $M^{2n}$ if and only if:

1) $\mu(N^{2})$ is a curve in $\Delta^n$;

2) For all $z \in \mu(N^{2})$, $T_z\mu(N^{2})$ is not orthogonal to $\rho_*(\mathbb R)$, where $\rho:\mathbb S^1 \to \mathbb T^n$ is the map in the diagram (\ref{dg});

3) At every vertex of $\mu(N^{2})$, $k_i$ and $g_i(x_1)$ in (\ref{gx}) and (\ref{ki}) satisfy:

\quad i) For $x_1 \neq 0$, we have
$
y=\sqrt{g_i(x_1^2)}
$
is smooth.

\quad ii) For $x_1 = 0$, we have $k_1\neq 0$, and

\quad\quad a) when $i \in Q$, we have
$
k_i = 0,
$
and
$
y=\sqrt{ g_j\!(x_1^2)}~(x_1 \in R)
$
is smooth at $x_1=0$.

\quad\quad b) when $i \in \{1, \cdots, n\}/Q$, we have
$
{k_i}/{k_1} \in \mathbb{Z},
$
$
y=\sqrt{ g_i\!(x_1^2)}/x_1^{{k_i}/{k_1}}~(x_1\geq0)
$
is smooth at $x_1=0$, and the derivatives of odd orders at $x_1=0$ are all zero.

\end{theorem}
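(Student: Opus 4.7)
The plan is to derive this theorem by assembling the three results already established earlier in the paper, treating necessity and sufficiency separately.

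For \emph{necessity}, assume $N^2$ is a 2-dimensional smooth $\mathbb{S}^1$-equivariant symplectic submanifold of $M^{2n}$. Condition (1) is the $k=1$ specialization of the earlier theorem showing that the moment map image of any equivariant symplectic submanifold is a section of $\Delta^n$. Condition (2) is already proved as a separate theorem via a direct contradiction argument using nondegeneracy of $i^*\omega$. For condition (3), I fix a vertex $v_1$ of the curve $l=\mu(N^2)$ on $\partial\Delta^n$, choose a vertex $o$ of the minimal face $F_{v_1}$ containing $v_1$, and invoke Theorem \ref{Gromov width} to identify a neighborhood of $\mu^{-1}(o)$ with the standard local model on $\mathbb{C}^n$. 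In these coordinates the portion of $N^2$ near $\mu^{-1}(v_1)$ becomes a smooth $S^1$-equivariant surface in $\mu^{-1}(l\setminus\{v_2\})$, so Theorem \ref{wen} forces the weight and smoothness relations listed in (3).

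For \emph{sufficiency}, begin with a curve $l \subset \Delta^n$ satisfying (1)--(3). Over the relative interior $l^\circ$, the surface is uniquely determined as a free orbit of the subgroup $\rho(\mathbb{S}^1)\subset \mathbb{T}^n$ through a local section of $\mu^{-1}(l^\circ)\to l^\circ$; the duality computation used in the non-orthogonality theorem shows that condition (2) is equivalent to $i^*\omega$ being nondegenerate on this free-orbit piece. At each endpoint $v_1$ of $l$ on $\partial\Delta^n$, Theorem \ref{wen} supplies a smooth $S^1$-equivariant symplectic cap $\mathcal{S}_{v_1}$ in the local Gromov-width model whose moment map image coincides with the prescribed end of $l$. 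Concatenating these caps with the free-orbit piece over $l^\circ$ yields the desired $N^2$.

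The main technical obstacle is the gluing. One must verify that the $\mathbb{S}^1$-actions produced by Theorem \ref{wen} at the two endpoints of $l$ are restrictions of a single embedding $\rho:\mathbb{S}^1\hookrightarrow\mathbb{T}^n$, i.e., that the local weight data $(k_1,\dots,k_n)$ at each vertex, expressed in the respective bases of isotropy weights, descend to the same one-parameter subgroup under the characteristic map \eqref{14}. Condition (3)(ii)(a) forces $k_i=0$ precisely along the directions spanning $F_{v_1}$, which aligns each local $\mathbb{S}^1$ with the unique global direction of $\rho$ transverse to $F_{v_1}$; once this compatibility is checked, smoothness, effective $\mathbb{S}^1$-equivariance, and the symplectic property of $N^2$ follow at once from the corresponding local statements and the first theorem of this section.
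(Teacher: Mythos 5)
Your proposal follows the paper's approach exactly: the paper offers no argument for this final theorem beyond the phrase ``Combining the above analysis,'' and your assembly of the section theorem ($k=1$ case), the non-orthogonality theorem, and Theorem \ref{wen} at each endpoint is precisely what is intended. The gluing compatibility you flag in the sufficiency direction --- that the local weight data $(k_1,\dots,k_n)$ produced at the two endpoints must arise from a single global embedding $\rho:\mathbb S^1\hookrightarrow\mathbb T^n$, and that the free-orbit piece over the interior must match the caps smoothly --- is a genuine point that the paper leaves entirely unaddressed, so your version is if anything more explicit than the original.
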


\noindent \textbf{Acknowledgement:} This work was supported by the Natural Science Foundation of Beijing (No.1234037). Shiyun Wen is grateful to Prof. Fuquan Fang and Prof. Zhenlei Zhang for their constant help.

\medskip
\noindent {\it\bf{Conflict of Interest}}: The author declares no conflict of interest.

\medskip
\noindent {\it\bf{Data availability}}:
No data was used for the research described in the article.

\bibliographystyle{plain}

\vspace{0.2cm}

\end{document}